\author{William Schlieper}
\title{The Cohomology of Quaternionic Hyperplane Complements}
\newcommand{\RR}{\mathbold{R}}
\newcommand{\CC}{\mathbold{C}}
\newcommand{\ZZ}{\mathbold{Z}}
\newcommand{\QQ}{\mathbold{Q}}
\newcommand{\HH}{\mathbold{H}}
\newcommand{\Aa}{\mathcal{A}}
\newtheorem{proposition}{Proposition}
\newtheorem{corollary}{Corollary}
\begin{document}
\maketitle
Over the complex numbers, the complement of a collection of hyperplanes is a widely-studied object; the cohomology ring, in particular, is known to have a structure depending only on the combinatorial properties of the intersection of hyperplanes.  The fundamental group, on the other hand, requires specific knowledge about the particular embedding in complex space, though the tower of nilpotent quotients can still be determined\cite{rybnikov}.

Over the quaternions, however, since hyperplane complements are simply connected, the topological properties of hyperplane complements are simultaneously more and less complicated.  In this article, we show not only that the cohomology ring of the complement is the same algebra as in the complex case, up to a multiplication of indices by $3$, but that the rational homotopy type can be determined entirely by the cohomology ring.

In Section \ref{sec:pre}, we give precise definitions for concepts used later in the article.  In Section \ref{sec:ses}, we use topology to find a short exact sequence in the cohomology groups of the hyperplane complement.  In Section \ref{sec:osa}, we show that the Orlik-Solomon algebra is isomorphic to the cohomology algebra.  In Section \ref{sec:form}, we show that the de Rham complex, which all rational-homotopy-theoretic information can be determined from, is quasi-isomorphic to the cohomology algebra. 
\section{Preliminaries}
\label{sec:pre}
Because $\HH$ is non-commutative, we must be careful when we talk about things such as subspaces of $\HH^n$; ``left'' subspaces and ``right'' subspaces are different.  To avoid confusion, we will instead deal with such things almost entirely abstractly: instead of talking about vector spaces as we would over a field, we simply talk about $\HH$-modules; the following examples will use left $\HH$-modules, but all the other results in the paper will continue to hold if the word ``left'' is replaced with the word ``right'' (and vice versa) throughout.

All $\HH$-modules in use in this paper will be finitely generated and therefore isomorphic to $\HH^n$ for some $n$; as a left module, the action of the ring $\HH$ and its group of units $\HH^\times$ will be on the left.  However, $\HH$-linear maps $f: V \rightarrow W$ are therefore equivalent to multiplication by a quaternionic matrix \textit{on the right}.  In particular, a map $a: \HH^n \rightarrow \HH$ is equivalent to an equation of the following form:
\[a(v)=\sum_{r=1}^n v_ra_r. \]

In the following, the word subspace is used almost exclusively to mean an $\HH$-submodule; that is, a central affine subspace.  The structure of the Orlik-Solomon algebra is much simpler in the central case than in the projective or general affine cases, and all other cases may be derived from the central one, so we shall focus on it.  In particular, a hyperplane refers to the kernel of a non-zero map $a: \HH^n \rightarrow \HH$, and the intersection of any number of hyperplanes will always be non-empty.  To define a hyperplane, $a$ will be unique only up to multiplication \textit{on the right} by another non-zero quaternion, and in many uses it is often more useful to think of a hyperplane in $V$ as a one-dimensional submodule of the right $\HH$-module $V^*$, especially when working over fields, but in this case, to avoid confusion, $a$ will be treated consistently like a map.

A \textit{hyperplane configuration} or hyperplane collection, in this paper, will be used to refer simply to a finite set $\Aa$ of subspaces of a given $\HH$-module $V$. In this case, the \textit{hyperplane complement} is simply the topological space
\[X=V \setminus \bigcup_{A \in \Aa}A. \]
This topological space is the main object of study in this paper.

Another important invariant is the purely combinatorial structure given by the intersections of the subcollections of $\Aa$.  In particular, given a collection $A_1,...,A_r$, the intersection can have codimension $r$, in which case the hyperplanes intersect ``normally'', or they have codimension less than $r$; when working in $V^*$, this corresponds to linear dependence, implying that the combinatorial structure can in fact be described as a matroid, and a hyperplane collection as a representation of said matroid over $\HH^n$.

In order to determine the cohomology of a hyperplane complement, we wish to define two hyperplane collections based on any given one.  Given any hyperplane $A \in \Aa$, the deletion of $A$ in $\Aa$ is simply the collection $\Aa'=\Aa \setminus \{A\}$ with one fewer hyperplane, with the corresponding hyperplane complement $X'$ being a superset of the original complement $X=X' \setminus A$.  However, since hyperplanes will have nonempty intersection, the complement $X \setminus X'$ will not simply be $A$ but will itself have hyperplanes removed, and also be a quaternionic hyperplane complement; this complement will be the restriction of $\Aa$ to $A$, $\Aa''=\{B \cap A|B \in \Aa'\}$; note that the choice of $B \subset V$ for any given $B \cap A \subset A$ is not unique, and the complement $X''=A \cap X'$ is in fact the set difference $X \setminus X'$.
\section{The Short Exact Sequence in Cohomology}
\label{sec:ses}
Let $X$ be the complement of the hyperplane arrangement $\Aa$ in the $\HH$-module $V$, and $X'$ and $X''$ be the complements of the deletion $\Aa'$ of and restriction $\Aa''$ to a particular hyperplane $A$ corresponding to $a: V \rightarrow \HH$ in $X$.  The singular cohomology $H^\bullet(X)$ can be computed by induction to $X'$, which has fewer hyperplanes removed, and $X''$, which is in one fewer quaternionic dimension.
\begin{proposition}
  For any $i \in \ZZ$, there is a short exact sequence
  \[0 \rightarrow H^i(X') \rightarrow H^i(X) \mathop{\rightarrow}^\sigma H^{i-3}(X'') \rightarrow 0. \]

  Furthermore, if $i$ is not a multiple of $3$, $i<0$, or $i>3 \dim V$, then $H^i(X)=0$.
\end{proposition}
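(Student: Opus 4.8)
The plan is to prove the short exact sequence and the vanishing statement \emph{together}, by induction on the number of hyperplanes $|\Aa|$, treating the vanishing statement as the inductive invariant and extracting it from the exact sequence at each stage. For the base case $\Aa = \emptyset$ we have $X = V \cong \HH^{\dim V}$, which is contractible, so $H^i(X)$ vanishes except in degree $0$; since $0$ is a multiple of $3$ lying in $[0, 3\dim V]$, both assertions hold vacuously. The inductive hypothesis will be that the vanishing statement holds for every arrangement with strictly fewer than $|\Aa|$ hyperplanes, in any $\HH$-module.

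First I would assemble the geometric input. The deletion complement $X'$ is an open subset of $V \cong \RR^{4\dim V}$, hence a smooth manifold, and $X'' = A \cap X'$ is a \emph{closed} submanifold of $X'$, being the intersection of $X'$ with the linear subspace $A$, which is closed in $V$. Because $A$ has real codimension $4\dim V - 4(\dim V - 1) = 4$ in $V$, the submanifold $X''$ has real codimension $4$ in $X'$, and its normal bundle is the restriction of the trivial bundle with fiber $V/A \cong \HH \cong \RR^4$, hence trivial and in particular oriented. The open complement is exactly $X = X' \setminus X''$. Applying the Thom isomorphism to a tubular neighborhood of $X''$ identifies $H^k(X', X)$ with $H^{k-4}(X'')$, so the long exact sequence of the pair $(X', X)$ becomes the Gysin sequence
\[ \cdots \to H^{i-4}(X'') \xrightarrow{g} H^i(X') \to H^i(X) \xrightarrow{\sigma} H^{i-3}(X'') \xrightarrow{g} H^{i+1}(X') \to \cdots, \]
in which $\sigma$ is the residue map and $g$ is the Gysin pushforward. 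Since $\Aa'$ has one fewer hyperplane and $\Aa''$ is an arrangement in the $(\dim V - 1)$-dimensional module $A$ with at most $|\Aa| - 1$ hyperplanes, the inductive hypothesis gives the vanishing statement for both $X'$ and $X''$; in particular $H^j(X')$ and $H^j(X'')$ are supported on $j \equiv 0 \pmod 3$.

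The crux — the step where the quaternionic codimension does the work — is that both copies of $g$ must vanish for degree reasons. For $g\colon H^{i-4}(X'') \to H^i(X')$ to be nonzero one would need simultaneously $i \equiv 0$ and $i - 4 \equiv 0 \pmod 3$, which is impossible; likewise $g\colon H^{i-3}(X'') \to H^{i+1}(X')$ would force $i \equiv 0$ and $i + 1 \equiv 0 \pmod 3$. Hence every Gysin map is zero, the long exact sequence breaks into the asserted short exact sequences, $H^i(X') \to H^i(X)$ is injective, and $\sigma$ is surjective. The vanishing claim for $X$ then reads off the sequence: whenever $3 \nmid i$, or $i < 0$, or $i > 3\dim V$, both outer terms vanish — using $i - 3 \equiv i \pmod 3$ and the bound $3(\dim V - 1) = 3\dim A$ for $X''$ — so $H^i(X) = 0$, closing the induction.

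The main obstacle I anticipate lies in the setup rather than the arithmetic: one must verify that $X''$ is genuinely a closed, cleanly embedded codimension-$4$ submanifold of $X'$ with oriented (indeed trivial) normal bundle, so that the Thom isomorphism with degree shift $4$ is legitimate with arbitrary coefficients. Once that is secured, the congruence argument forcing the vanishing of $g$ is immediate, and the quaternionic analogue of the Orlik--Solomon recursion follows formally.
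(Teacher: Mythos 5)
Your proposal is correct and takes essentially the same route as the paper: the paper likewise runs the long exact sequence of the pair $(X',X)$, identifies $H^i(X',X)\simeq H^{i-4}(X'')$ via a tubular neighborhood of $X''$ that is a trivial bundle with fibers $\HH$ and $\HH^\times$ (your Thom isomorphism for the trivial normal bundle $V/A\cong\HH$), and then collapses the sequence by the same mod-$3$ degree argument inside the same induction on the arrangement. The only cosmetic difference is that you package the excision step as a Gysin sequence and spell out the closed-embedding and orientation checks that the paper leaves implicit.
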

\begin{proof}
  The long exact sequence in cohomology implies the exactness of the sequence
  \[H^{i}(X',X) \rightarrow H^i(X') \rightarrow H^i(X) \rightarrow H^{i+1}(X',X) \rightarrow H^{i+1}(X'). \]
  Thus, we first wish to show $H^{i}(X',X)\simeq H^{i-4}(X'')$.
  Since $X=X'\setminus X''$ and all hyperplane complements are manifolds, any relative cochain on $X'$ which is zero on $X$ will be zero outside a tubular neighborhood of $X''$; call this neighborhood $Y$, and let $Y^\times=Y \setminus X''$.  Then, $Y$ and $Y^\times$ are trivial fiber bundles over $X''$ with fibers $\HH,\HH^\times$ respectively, and 
  \[H^\bullet(X',X)=H^\bullet(Y,Y^\times)\simeq H^\bullet(\HH,\HH^\times) \otimes H^\bullet(X''); \]
  however, $H^\bullet(\HH,\HH^\times)$ is trivial outside of degree $4$ where it is isomorphic to $\ZZ$, so we have $H^{i}(X',X)\simeq H^{i-4}(X'')$; in particular, we now have the exactness at $H^i(X)$ of the desired short exact sequence.

  The theorem regarding multiples of $3$ follows from induction.  The base case of the induction is a vector space $\HH^n \cong \RR^{4n} \simeq point$; the cohomology is simply $\ZZ$ at degree $0$ and $0$ elsewhere.  If $i$ is not a multiple of $3$, $i<0$, or $i>3(\dim V)$, then $H^i(X')=H^{i-3}(X'')=0$ by the induction hypothesis and we have a short exact sequence
  \[0 \rightarrow H^i(X) \rightarrow 0 \]
  implying $H^i(X)=0$.  By the multiple-of-$3$ restriction, furthermore, the long exact sequence collapses into exactly the short exact sequences we desire.
\end{proof}
This proof is similar to Theorem 3.127 in \cite{orlikterao} or Lemma 4.1 in \cite{levine}; however, the use of degree-$3$ generators in the quaternionic case means that the proof immediately produces a short exact sequence, allowing for the following two corollaries:
\begin{corollary}
  $H^i(X)$ is a free abelian group.
\end{corollary}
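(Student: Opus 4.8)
The plan is to induct on the number of hyperplanes $|\Aa|$, proving the statement simultaneously for arrangements in every $\HH$-module $V$. The engine of the induction is the short exact sequence
\[0 \rightarrow H^i(X') \rightarrow H^i(X) \mathop{\rightarrow}^\sigma H^{i-3}(X'') \rightarrow 0\]
just established, together with the standard homological fact that a short exact sequence of abelian groups whose right-hand term is free must split, so that the middle term becomes a direct sum of the two outer terms.

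For the base case, when $\Aa$ is empty we have $X = V \cong \RR^{4\dim V}$, which is contractible; its cohomology is $\ZZ$ in degree $0$ and $0$ otherwise, both of which are free. For the inductive step I would fix $A \in \Aa$ and form the deletion $\Aa'$ and restriction $\Aa''$. The key observation is that both are strictly smaller in the induction parameter: the deletion $\Aa'$ has exactly $|\Aa| - 1$ hyperplanes, and the restriction $\Aa'' = \{B \cap A \mid B \in \Aa'\}$ has at most $|\Aa| - 1$ hyperplanes (possibly fewer, since distinct $B$ may have the same trace on $A$). Hence by the inductive hypothesis $H^i(X')$ and $H^{i-3}(X'')$ are both free abelian. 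Since free abelian groups are projective, the sequence splits and $H^i(X) \cong H^i(X') \oplus H^{i-3}(X'')$; a direct sum of free abelian groups is free, which closes the induction.

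I do not expect any serious obstacle here, since the proposition has already done the real topological work in producing a short exact sequence rather than merely a long one. The only points requiring care are bookkeeping: verifying that the induction is well-founded by confirming that the restriction really is a quaternionic hyperplane complement with strictly fewer hyperplanes (this is exactly what lets a single induction on $|\Aa|$ handle both smaller spaces even though $X''$ lives in a different ambient module), and recalling the elementary but essential fact that it is the freeness of the \emph{quotient} term $H^{i-3}(X'')$, rather than of the subgroup, that forces the sequence to split. One could alternatively phrase the induction lexicographically on the pair $(\dim V, |\Aa|)$, but the single parameter $|\Aa|$ suffices precisely because both derived arrangements lose at least one hyperplane.
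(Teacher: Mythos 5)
Your proof is correct and is exactly the argument the paper intends: it states the corollary as an immediate consequence of the Proposition, and the intended reasoning is precisely your induction on $|\Aa|$, splitting the short exact sequence because the quotient term $H^{i-3}(X'')$ is free (hence projective). Your bookkeeping points --- that both the deletion and the restriction have strictly fewer hyperplanes, and that it is freeness of the quotient rather than the subgroup that forces splitting --- are accurate and fill in the details the paper leaves implicit.
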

\begin{corollary}
  Let $P_X(t)$ be the Poincar\'{e} polynomial of $X$.  Then $P_X(t)=P_{X'}(t)+t^3P_{X''}(t)$.
\end{corollary}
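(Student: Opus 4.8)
The plan is to convert the short exact sequence supplied by the Proposition into a numerical relation among Betti numbers, and then repackage those relations as a single identity of generating functions. First I would recall that the Poincar\'{e} polynomial is by definition
\[P_X(t) = \sum_{i} \operatorname{rank} H^i(X)\, t^i,\]
with the analogous definitions for $X'$ and $X''$. The first Corollary guarantees that every $H^i$ is a free abelian group, so the word ``rank'' is unambiguous here and the sums are finite by the vanishing range established in the Proposition.

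Next I would feed in the short exact sequence
\[0 \rightarrow H^i(X') \rightarrow H^i(X) \rightarrow H^{i-3}(X'') \rightarrow 0\]
from the Proposition. Since rank is additive along a short exact sequence of finitely generated abelian groups, this yields immediately the pointwise relation
\[\operatorname{rank} H^i(X) = \operatorname{rank} H^i(X') + \operatorname{rank} H^{i-3}(X'')\]
valid for every integer $i$.

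Finally I would multiply this identity by $t^i$ and sum over all $i$. The first term on the right reassembles directly into $P_{X'}(t)$, while for the second term the degree shift from $i$ to $i-3$ is exactly what extracts a factor of $t^3$: reindexing by $j = i-3$ gives $\sum_j \operatorname{rank} H^j(X'')\, t^{j+3} = t^3 P_{X''}(t)$. Assembling the two pieces produces precisely $P_X(t) = P_{X'}(t) + t^3 P_{X''}(t)$. I do not expect a genuine obstacle in this argument; its entire content is already carried by the Proposition together with the freeness Corollary, and the only place calling for a moment of care is the bookkeeping of the degree shift, to ensure that the term involving $H^{i-3}(X'')$ contributes the factor $t^3$ and not some other power of $t$.
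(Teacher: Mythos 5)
Your proof is correct and is exactly the argument the paper has in mind: the corollary is stated without proof there precisely because it follows from the short exact sequence by additivity of rank (using the freeness corollary) and the degree-$3$ reindexing you carry out. Nothing to add.
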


\section{The Orlik-Solomon Algebra}
\label{sec:osa}

However, the above short exact sequence does not encode all the information given in the cohomology ring of $X$.  Much as in the complex case, the full ring structure of a quaternionic hyperplane arrangement is determined by its combinatorial structure through the Orlik-Solomon algebra, a commutative graded differential algebra (over any ring; without loss of generality, we use $\ZZ$) whose structure is entirely determined by which intersections of hyperplanes have the ``expected'' codimension.

Given a hyperplane arrangement $\Aa$, ``the'' exterior algebra $E(\Aa)$ is simply the algebra $\Lambda(e_A)_{A \in \Aa}$, with a differential $\partial$ of degree $-1$ given by $\partial e_A=1$.  The Orlik-Solomon ideal $I$ is the ideal generated by all elements of the form $\partial(e_{A_1}...e_{A_k})$ for all collections $\{A_1,...,A_k\} \subseteq \Aa$ such that the codimension of $A_1 \cap ... \cap A_r$ is less than $r$.  The Orlik-Solomon algebra is simply defined as $O(\Aa)=E(\Aa)/I$ with $\partial$ defined as before.

However, to be isomorphic to the cohomology ring in the quaternionic case, we must actually use $E_{\times 3}(\Aa)$ and $O_{\times 3}(\Aa)$, where the generators $e_A$ have degree $3$ and $\partial$ has degree $-3$.

\begin{proposition}
  Given a hyperplane arrangement $\Aa$ over an $\HH$-module $V$ and corresponding complement $X$, there is a surjective map of degree $-3$ commutative graded differential algebras
  \[\psi: E_{\times 3}(\Aa) \rightarrow H^\bullet(X). \]
\end{proposition}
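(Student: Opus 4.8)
The plan is to build $\psi$ directly from the defining linear forms and then prove surjectivity by induction using the short exact sequence of the previous section. For each hyperplane $A \in \Aa$ with defining map $a : V \to \HH$, the restriction $a|_X$ never vanishes, so it factors through a map $a : X \to \HH^\times$. Since $\HH^\times \simeq S^3$ has $H^\bullet(\HH^\times) \cong \ZZ$ concentrated in degrees $0$ and $3$, I would set $\omega_A := a^\ast\nu \in H^3(X)$, where $\nu$ generates $H^3(\HH^\times)$. Because $\nu^2 \in H^6(\HH^\times) = 0$ we get $\omega_A^2 = a^\ast(\nu^2) = 0$, and as $\omega_A$ is odd the classes anticommute; hence by the universal property of the exterior algebra there is a unique homomorphism of graded-commutative algebras $\psi : E_{\times 3}(\Aa) \to H^\bullet(X)$ with $\psi(e_A) = \omega_A$.

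Compatibility with the differentials reduces to a single identity. First, $H^\bullet(X)$ carries a degree $-3$ antiderivation $\partial$, which I would realize geometrically: the unit quaternions $S^3 = \mathrm{Sp}(1)$ act freely on $X$ by left multiplication (freeness holds on $V\setminus\{0\} \supseteq X$, and $a(qv)=q\,a(v)\neq 0$ keeps $X$ invariant), so $X \to X/S^3$ is a principal $S^3$-bundle and $\partial := \pi^\ast\!\circ\!\int_{S^3}$ is a well-defined degree $-3$ operator. Granting that $\partial$ is an antiderivation, both $\psi\partial$ and $\partial\psi$ are antiderivations over $\psi$ (degree $-3$ maps $D$ with $D(xy)=D(x)\psi(y)\pm\psi(x)D(y)$), so they coincide as soon as they agree on generators, i.e. as soon as $\partial\omega_A = \psi(\partial e_A) = 1$. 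That last equation is exactly the degree-one property of the angular form: restricted to an $S^3$-orbit, $a$ is $q\mapsto q\,a(v)$, a degree-one self-map of $S^3$, so $\int_{S^3}\omega_A = 1$ and $\partial\omega_A = \pi^\ast(1) = 1$.

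For surjectivity I would induct on $|\Aa|$. The base case $\Aa = \emptyset$ gives $X \simeq \mathrm{point}$ and $\psi = \mathrm{id}$. For the step, fix $A \in \Aa$ and pass to the deletion $\Aa'$ and restriction $\Aa''$, with complements $X' \supseteq X$ and $X'' = A \cap X'$; both have strictly fewer hyperplanes, so their maps $\psi'$ and $\psi''$ are onto by induction. The inclusion $E_{\times 3}(\Aa') \hookrightarrow E_{\times 3}(\Aa)$ as the subalgebra on $\{e_B\}_{B \neq A}$ identifies $\psi|_{E_{\times 3}(\Aa')}$ with $i^\ast \circ \psi'$, where $i^\ast : H^\bullet(X') \to H^\bullet(X)$ is restriction and $i^\ast\omega'_B = \omega_B$. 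Hence $\mathrm{im}\,\psi \supseteq \mathrm{im}(i^\ast\circ\psi') = \mathrm{im}(i^\ast) = \ker\sigma$ by exactness. It remains to show $\sigma$ carries $\mathrm{im}\,\psi$ onto all of $H^{\bullet-3}(X'')$.

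The heart of the argument is therefore a residue computation identifying $\sigma$: for $\xi \in E_{\times 3}(\Aa')$ I claim $\sigma(\omega_A\,\psi(\xi)) = \pm\,\psi''(\bar\xi)$, where $\bar\xi$ is the image of $\xi$ under the surjection $E_{\times 3}(\Aa') \to E_{\times 3}(\Aa'')$, $e_B \mapsto e_{B \cap A}$. This rests on two facts about the connecting map $\delta$ underlying $\sigma = (\text{Thom iso})\circ\delta$. First, the long exact sequence of $(X',X)$ is a sequence of $H^\bullet(X')$-modules, so $\delta(\omega_A\cdot i^\ast\psi'(\xi)) = \pm\,\psi'(\xi)\cdot\delta(\omega_A)$. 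Second, $\delta(\omega_A)$ is the degree-$4$ Thom class of the trivial normal pair $(\HH,\HH^\times)$ along $X''$ from the previous proposition, since the coboundary of the global angular form is the orientation class. Under the Thom isomorphism the module action by $\psi'(\xi)$ becomes cup product with $\psi'(\xi)|_{X''} = \psi''(\bar\xi)$, yielding the claim. As $\psi''$ and $E_{\times 3}(\Aa')\to E_{\times 3}(\Aa'')$ are both onto, $\sigma(\mathrm{im}\,\psi) = H^{\bullet-3}(X'')$; together with $\ker\sigma \subseteq \mathrm{im}\,\psi$ and exactness this forces $\psi$ onto. I expect the main obstacle to be making the residue identity $\delta(\omega_A) = [\text{Thom class}]$ fully rigorous — pinning down orientations and signs, and checking $\psi'(\xi)|_{X''} = \psi''(\bar\xi)$, which needs that $b|_{X''}$ defines the hyperplane $B\cap A$ inside $A$.
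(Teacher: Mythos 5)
Your proposal is correct in substance and its overall skeleton matches the paper's proof --- pull back the degree-$3$ generator of $H^\bullet(\HH^\times)$ along the defining maps $a: X \rightarrow \HH^\times$, then prove surjectivity by deletion--restriction induction through the short exact sequence, writing $x = w + e_A w'$ --- but you substitute two genuinely different technical devices, one of which hides the only real soft spot. For the differential, the paper never passes to the quotient $X/S^3$: it uses the coaction $\rho^*: H^\bullet(X) \rightarrow \Lambda(s) \otimes H^\bullet(X)$ induced by the $\HH^\times$-action and defines $\partial$ by $\rho^*(x) = 1 \otimes x + s \otimes \partial x$, so that $\partial^2 = 0$ falls out of associativity of the action and the Leibniz rule out of $\rho^*$ being a ring homomorphism. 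That Leibniz rule is exactly the lemma you had to grant: for your $\partial = \pi^* \circ \pi_*$ the antiderivation property does \emph{not} follow formally from the projection formula, which only controls products in which one factor is pulled back from $X/S^3$; the statement is true (your operator agrees with the coaction differential for a free action), but as written it is an unproved step, and adopting the coaction definition makes it automatic while also dispensing with freeness, the smooth quotient, and fiber orientations --- though your observation that $\partial^2 = 0$ is immediate from $\pi_* \pi^* = 0$ is a nice dividend of your formulation. For the key identity $\sigma(\psi(e_A w')) = \psi''(\overline{w'})$, the paper argues by naturality against the explicit model map $(a, b_1, \ldots, b_r): X \rightarrow (\HH^\times)^{r+1}$, which extends over $X'$ to a map into $\HH \times (\HH^\times)^r$ and restricts on $X''$ to $(c_1, \ldots, c_r)$; you instead use the $H^\bullet(X')$-module structure of the long exact sequence together with the identification of $\delta(\omega_A)$ with the Thom class of the tubular pair $(Y, Y^\times)$. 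Your version is the more standard residue computation and arguably more rigorous than the paper's terse ``passing through these will have the same effect as the connecting homomorphism''; the identification you flagged as the main obstacle is in fact immediate from naturality of $\delta$ applied to the map of pairs $(Y, Y^\times) \rightarrow (\HH, \HH^\times)$ induced by $a$, since $\delta$ carries the generator of $H^3(\HH^\times)$ to the generator of $H^4(\HH, \HH^\times)$. Everything else --- checking $\omega_A^2 = a^*(\nu^2) = 0$ honestly over $\ZZ$ rather than from graded commutativity alone, $\ker \sigma = \mathrm{im}\, i^*$, and the induction structure --- coincides with the paper.
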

\begin{proof}
  The $\HH$-module structure $\rho: \HH \times V \rightarrow V$ restricts to an action $\rho: \HH^\times \times X \rightarrow X$ of the multiplicative group on $X$, which passes to a map in cohomology
  \[\rho^*: H^\bullet(X) \rightarrow H^\bullet(\HH^\times) \otimes H^\bullet(X); \]

  Since $H^\bullet(\HH^\times)=\Lambda(s)$ with $s$ a single generator in degree $3$, a differential on $H^\bullet(X)$ can be defined by $\rho^*(x)=1 \otimes x + s \otimes \partial x$; the associativity of the action corresponds to $\partial^2=0$ and the product structure being preserved corresponds to the Leibniz rule.

  For any $A \in \Aa$, there is a corresponding $\HH$-linear map $a: V \rightarrow \HH$ which restricts to a $\HH^\times$-map $a: X \rightarrow \HH^\times$ and we let $\psi(e_A)=a^*(s)$; the $\HH$-linearity ensures that the maps take $\partial(\psi(e_A))$ to $1$.

  To check surjectivity, we first observe the action of the short exact sequence in cohomology on particular elements of $\psi(E_{\times 3}(\Aa))$.  Fix $A \in \Aa$ and define the corresponding deletions $\Aa',X'$, restrictions $\Aa'',X''$, and map $a: V \rightarrow \HH$.  Next, select any number $B_1,...,B_r \in \Aa'$, and set $C_l=B_l \cap A \in \Aa''$ with $b_l: V \rightarrow \HH$ restricting to $c_l: A \rightarrow \HH$.  The element $\psi(e_Ae_{B_1}...e_{B_r})$ will correspond to a particular element in $H^{3(r+1)}(X)$, which can be defined as
  \[\psi(e_Ae_{B_1}...e_{B_r})=(a,b_1,...,b_r)^*(s \otimes ... \otimes s) \]
  where
  \[(a,b_1,...,b_r):X \rightarrow (\HH^\times)^{r+1} \]
  is the restriction of an $\HH$-linear map $V \rightarrow \HH^{r+1}$; the corresponding map $X' \rightarrow \HH \times (\HH^\times)^r$ restricts to a map $(c_1,...,c_r): X'' \rightarrow (\HH^\times)^r$; passing through these will have the same effect as the connecting homomorphism, so $\sigma(\psi(e_Ae_{B_1}...e_{B_r}))=\psi(e_{C_1}...e_{C_r}) \in H^{3r}(X'')$.

  We now proceed by induction.  In the base case of no hyperplanes, there is also no cohomology.

  Fix $A \in \Aa$.  Given $x \in H^{3(r+1)}(X)$, we have some corresponding $x''=\sigma(x) \in H^{3r}(X'')$.  By induction, there is some $w'' \in E_{\times 3}(\Aa'')$ such that $\psi(w'')=x''$.  For every $C \in \Aa''$ there exists some $B \in \Aa'$ such that $B \cap A = C$, so the corresponding map $\pi: E_{\times 3}(\Aa')\rightarrow E_{\times 3}(\Aa'')$ is surjective, so there's some $w'$ such that $\pi(w')=w''$ and $\sigma(e_Aw')=x''=\sigma(x)$; thus, $x-e_Aw' \in \ker(\sigma)$ which means it lies in $H^{3(r+1)}(X')$, and by the induction hypothesis there exists an appropriate $w \in E_{\times 3}(\Aa') \subset E_{\times 3}(\Aa)$ such that $x=w+e_aw'$.
\end{proof}

\begin{proposition}
  Given a hyperplane arrangement $\Aa$ over an $\HH$-module $V$ and corresponding complement $X$, there is a surjective map of degree $-3$ commutative graded differential algebras
  \[\varphi: O_{\times 3}(\Aa) \rightarrow H^\bullet(X). \]
\end{proposition}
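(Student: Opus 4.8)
The plan is to show that the surjection $\psi$ of the previous proposition descends to the quotient $O_{\times 3}(\Aa) = E_{\times 3}(\Aa)/I$, and to take $\varphi$ to be the induced map. Since the quotient map $E_{\times 3}(\Aa) \to O_{\times 3}(\Aa)$ is a surjective map of degree $-3$ commutative graded differential algebras and $\psi$ is surjective, the induced $\varphi$ is automatically a surjective differential-algebra map; so the entire content is the claim $I \subseteq \ker\psi$. Because $\ker\psi$ is an ideal closed under $\partial$ (as $\psi$ commutes with the differentials), it suffices to check that $\psi$ annihilates each generator $\partial(e_{A_1}\cdots e_{A_k})$ of $I$, that is, each one arising from a dependent set $\{A_1,\dots,A_k\}\subseteq\Aa$ whose intersection has codimension $m<k$.

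Since $\psi(\partial(e_{A_1}\cdots e_{A_k})) = \partial\,\psi(e_{A_1}\cdots e_{A_k})$, it is enough to prove the stronger statement that $\psi(e_{A_1}\cdots e_{A_k}) = 0$ for every such dependent set. Here I would use the explicit description from the previous proof: writing $f = (a_1,\dots,a_k)\colon X \to (\HH^\times)^k$ for the restriction of the $\HH$-linear map $F = (a_1,\dots,a_k)\colon V \to \HH^k$ (each $a_i$ being non-vanishing on $X$ because $A_i\in\Aa$), multiplicativity of $\psi$ gives $\psi(e_{A_1}\cdots e_{A_k}) = a_1^*(s)\cdots a_k^*(s) = f^*(s\otimes\cdots\otimes s)$, the pullback of the top class of $H^\bullet((\HH^\times)^k) = \Lambda(s_1,\dots,s_k)$.

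The geometric heart of the argument is that dependence forces this pullback to vanish. The image $W = F(V) \subseteq \HH^k$ is a sub-$\HH$-module of dimension exactly $m = \mathrm{codim}(A_1\cap\cdots\cap A_k) = \mathrm{rank}\,F < k$, and since every $a_i$ is nonzero on $X$ the map $f$ lands in $W^\times := W \cap (\HH^\times)^k$. I would then observe that $W^\times$ is itself the complement in $W\cong\HH^m$ of the hyperplanes $W\cap\{x_i=0\}$ (each a genuine hyperplane, as $x_i|_W\neq 0$), so it is a central quaternionic hyperplane complement of $\HH$-dimension $m$. Factoring $f$ as $X \xrightarrow{f'} W^\times \xrightarrow{\iota} (\HH^\times)^k$ yields $\psi(e_{A_1}\cdots e_{A_k}) = (f')^*\iota^*(s\otimes\cdots\otimes s)$, and $\iota^*(s\otimes\cdots\otimes s)$ lives in $H^{3k}(W^\times)$, which vanishes by the Proposition of Section~\ref{sec:ses} because $3k > 3m$. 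Hence the class is zero, $\psi$ kills every dependent monomial and therefore every generator of $I$, so $\varphi$ exists and is the desired surjection.

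I expect the main obstacle to be the bookkeeping in this geometric step: verifying that $f$ genuinely factors through $W^\times$, that $W^\times$ really is a central quaternionic hyperplane complement so that the degree-vanishing theorem of Section~\ref{sec:ses} applies to it, and that the codimension of the intersection coincides with the rank $m$ of $F$, which is exactly what ties the combinatorial dependence condition defining $I$ to the topological vanishing $H^{3k}(W^\times)=0$. Everything else is formal, being the descent of a surjection to a quotient together with the compatibility of $\psi$ with products and differentials already established.
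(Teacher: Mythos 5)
Your proposal is correct and follows essentially the same strategy as the paper: both reduce to showing $\psi(e_{A_1}\cdots e_{A_k})=0$ for every dependent set and then use $\psi\partial=\partial\psi$ to kill the generators of $I$, with the vanishing in both cases coming from the degree bound $H^i=0$ for $i>3\dim$ (the Proposition of Section \ref{sec:ses}) applied to a hyperplane complement of quaternionic dimension $m<k$. The only cosmetic difference is that you factor the class through the image complement $W^\times=F(V)\cap(\HH^\times)^k$, whereas the paper first restricts to the subarrangement complement $V\setminus(A_1\cup\cdots\cup A_k)$ and deformation-retracts it, via the additive action of $\bigcap_i A_i$, onto the quotient complement --- which is the same space as your $W^\times$ by the first isomorphism theorem, since $\ker F=\bigcap_i A_i$.
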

\begin{proof}
  To show that such a $\varphi$ exists, we merely need to show that $\psi(I)=0$.
  
  Let $A_1,...,A_r \in \Aa$.  Then, $\psi(e_{A_1}...e_{A_r})$ will pass through the inclusion map
  \[H^{3r}(V \setminus (A_1 \cup ... \cup A_r)) \rightarrow H^{3r}(X) \]
  so without loss of generality let $\Aa=\{A_1,...,A_r\}$.

  Let $W=\bigcap_{A \in \Aa}A$.  Then, $W$ acts on $X$ additively, and $X$ is homotopic to $X/W$ through a deformation retract.  If $W$ has codimension $<r$, then $X/W$ has dimension $<r$ and
  \[H^{3r}(X)=H^{3r}(X/W)=0. \]

  Therefore, $\psi(e_{A_1}...e_{A_r})=0$ when $A_1 \cap ... \cap A_r$ has codimension $<r$.

  Since $\psi$ is a map of commutative differential graded algebras,
  \[\psi(\partial(e_{A_1}...e_{A_r}))=\partial(\psi(e_{A_1}...e_{A_r}))=\partial(0)=0. \qedhere \]
\end{proof}
\begin{proposition}
  The map $\varphi: O_{\times 3}(\Aa) \rightarrow H^\bullet(X)$ defined in the previous proposition is an isomorphism.
\end{proposition}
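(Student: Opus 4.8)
The plan is to upgrade the surjection $\varphi$ to an isomorphism by a rank count, reducing everything to a comparison of Poincar\'e polynomials. First I would record two structural facts: by the first corollary of Section \ref{sec:ses} each $H^i(X)$ is free abelian, and by the standard combinatorics of the Orlik--Solomon algebra each graded piece $O^i_{\times 3}(\Aa)$ is also a finitely generated free abelian group, with the no-broken-circuit monomials of the underlying matroid as a basis (the tripling of degrees is purely cosmetic and does not touch the combinatorics). A degree-wise surjection $\varphi^i \colon O^i_{\times 3}(\Aa) \to H^i(X)$ of finitely generated free abelian groups whose source and target have equal rank is automatically injective: its kernel is a subgroup of a free group, hence free, and of rank zero, hence trivial. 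Thus it suffices to prove the equality of Poincar\'e polynomials $P_{O_{\times 3}(\Aa)}(t) = P_X(t)$.

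For this I would run the same deletion--restriction induction used throughout the paper. Since $O_{\times 3}(\Aa)$ depends only on the matroid of $\Aa$, the classical deletion--restriction short exact sequence for Orlik--Solomon algebras (see \cite{orlikterao}), with all degrees scaled by $3$, yields the recursion
\[P_{O_{\times 3}(\Aa)}(t) = P_{O_{\times 3}(\Aa')}(t) + t^3 P_{O_{\times 3}(\Aa'')}(t),\]
where $\Aa'$ is the deletion of a chosen $A$ and $\Aa''$ is the restriction to $A$, which is itself a genuine quaternionic arrangement in $A \cong \HH^{\dim V - 1}$. The second corollary of Section \ref{sec:ses} gives the \emph{identical} recursion $P_X(t) = P_{X'}(t) + t^3 P_{X''}(t)$ for the complement. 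Both $\Aa'$ and $\Aa''$ contain strictly fewer hyperplanes than $\Aa$, and in the base case of the empty arrangement both polynomials equal $1$, corresponding to $H^0(V) = \ZZ$ for the contractible complement and to $O^0_{\times 3}(\Aa) = \ZZ$. Induction on $|\Aa|$ then forces $P_{O_{\times 3}(\Aa)}(t) = P_X(t)$, completing the argument.

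I expect the only real content, and hence the main obstacle, to be the first recursion: establishing that the graded ranks of $O_{\times 3}(\Aa)$ obey deletion--restriction and, in particular, that $O_{\times 3}(\Aa)$ is free. This is exactly the point at which one must appeal to the purely matroid-theoretic structure theory of Orlik--Solomon algebras; everything quaternion-specific has already been absorbed into the two corollaries of Section \ref{sec:ses}. As an alternative to the bare rank count, one could instead assemble a commutative ladder whose rows are the topological short exact sequence of the first proposition and the combinatorial deletion--restriction sequence, with vertical maps $\varphi'$, $\varphi$, $\varphi''$; the right-hand square commutes by the computation $\sigma(\psi(e_A e_{B_1}\cdots e_{B_r})) = \psi(e_{C_1}\cdots e_{C_r})$ already carried out above, and the five lemma then gives the isomorphism directly once $\varphi'$ and $\varphi''$ are known by induction.
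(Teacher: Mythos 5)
Your proposal is correct and follows essentially the same route as the paper: both reduce the isomorphism to the equality $P_{O_{\times 3}(\Aa)}(t)=P_X(t)$ via freeness of the graded pieces, and both obtain that equality from the fact (Chapter 3 of \cite{orlikterao}) that the Orlik--Solomon Poincar\'e polynomial satisfies the same deletion--restriction recurrence as the one established topologically in Section \ref{sec:ses}. Your version merely makes explicit what the paper leaves implicit --- the no-broken-circuit basis for freeness, the induction on $|\Aa|$ with the empty base case --- and the five-lemma ladder you sketch at the end is a viable alternative but not the argument the paper uses.
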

\begin{proof}
  There exists some graded abelian group $M$ such that we have a short exact sequence
  \[0 \rightarrow M \rightarrow O_{\times 3}(\Aa) \rightarrow H^\bullet(X) \rightarrow 0. \]

  Since both $O_{\times 3}(\Aa)$ and $H^\bullet(X)$ are free abelian groups, $M$ is as well, and we have the relation

  \[P_M(t)=P_{O(\Aa)}(t^3)-P_X(t). \]

  However, in chapter 3 of \cite{orlikterao} it is shown that $P_{O(\Aa)}$ satisfies the same recurrence relation as $P_X$, so $P_M(t)=0$ and the map is an isomorphism of abelian groups, thus making it an isomorphism of commutative differential graded algebras.
\end{proof}

\section{Formality}
\label{sec:form}
Unfortunately, the cohomology ring does not provide the entirety of the homotopy-theoretic information of an object.  In particular, the de Rham complex $\Omega^\bullet(X)$ of a smooth manifold is a commutative differential graded algebra that contains more information than the de Rham cohomology, which is equivalent to the singular cohomology ring over $\RR$ or $\CC$.  However, since the de Rham complex is infinite-dimensional and homotopically equivalent manifolds have different complexes, the de Rham complex can be hard to deal with; in the more general case, there is a CDGA $A_{PL}(X)$ over $\QQ$ corresponding to any topological space, whose cohomology is the singular cohomology ring over $\QQ$, which are even less directly computable than the de Rham complex\cite{hess}.

However, the maps corresponding to homotopy equivalences are CDGA homomorphisms which pass to the identity on cohomology; by treating such maps, called quasi-isomorphisms, as homotopies, it turns out that there's an equivalence between simply-connected CW complexes up to homotopy and a certain class of CDGAs up to quasi-isomorphism; in particular, from any quasi-isomorphic CDGA one can find the torsion-free parts of the homotopy groups.  This also suggests that rational-homotopic data can be found from simpler CDGAs, the simplest of all being the cohomology itself with differential zero.  Those CDGAs, and the topological spaces corresponding to them, quasi-isomorphic to their cohomologies in such a fashion are called \textit{formal}.  Here, we will show that a quaternionic hyperplane complement is formal, and thus massively simplify the determination of its homotopy theory. \footnote{In fact, \cite{feichtneryuzvinsky} proves formality for a larger class of subspace arrangements that can be shown to include quaternionic hyperplane arrangements; however, this proof provides a direct quasi-isomorphism instead of a zig-zag of maps.}

An important fact here is that many of the things which are usually performed only on cohomology can also be performed on the full de Rham complex CDGA; in particular, the following:
\begin{proposition}
\label{prop:s3d}
  If the group $\HH^\times$ acts on a manifold $X$, then the action induces a degree $-3$ differential on the CDGA $\Omega^\bullet(X)$, which reduces to the one on cohomology, and anticommutes with the standard degree $1$ differential.
\end{proposition}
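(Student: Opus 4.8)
The plan is to build $\partial$ by pulling forms back along the action and then integrating out the group. Since $\HH^\times \cong \RR_{>0} \times \mathrm{Sp}(1)$ with $\RR_{>0}$ central and contractible, the action of $\HH^\times$ restricts to an action of the compact subgroup $\mathrm{Sp}(1) = \{q \in \HH : |q| = 1\} \cong S^3$, and this restriction carries the same (co)homological information; so I would work with $S^3$ throughout. Write $\iota\colon S^3 \times X \to X$ for the restricted action and fix the orientation of $S^3$ so that $\int_{S^3}$ of the positive generator is $1$. Then $S^3 \times X$ is the trivial $S^3$-bundle over $X$, and fiber integration is a degree $-3$ map $\pi_*\colon \Omega^\bullet(S^3 \times X) \to \Omega^{\bullet-3}(X)$. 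I would define
\[ \partial = \pi_* \circ \iota^* \colon \Omega^\bullet(X) \to \Omega^{\bullet-3}(X), \]
which is manifestly of degree $-3$.

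Anticommutation with $d$ is the easy half. Pullback always commutes with the exterior derivative, $\iota^* d = d\iota^*$, while fiber integration over a closed oriented fiber of dimension $k$ satisfies $d\pi_* = (-1)^k \pi_* d$; here $k = \dim S^3 = 3$ is odd, so $d\pi_* = -\pi_* d$. Composing, $d\partial = d\pi_*\iota^* = -\pi_* d\iota^* = -\pi_*\iota^* d = -\partial d$, that is $d\partial + \partial d = 0$, and I would stress that the anticommutation is exactly the oddness of $\dim S^3$. It follows that $\partial$ descends to $H^\bullet(X) = H^\bullet(\Omega^\bullet(X))$. To see that the descended map is the differential of the earlier proposition, recall that on cohomology $\iota^*$ is the coaction $x \mapsto 1 \otimes x + s \otimes \partial x$ over $H^\bullet(S^3) = \Lambda(s)$, while $\pi_*$ is the slant product with the fundamental class $[S^3]$ dual to $s$; this extracts precisely the $s$-component, recovering the earlier $\partial$ up to the sign fixed by the orientation convention.

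The main work is $\partial^2 = 0$, and this is where associativity of the action enters, mirroring the correspondence ``associativity $\leftrightarrow \partial^2 = 0$'' already observed on cohomology. Writing $m\colon S^3 \times S^3 \to S^3$ for the group multiplication, I would first establish
\[ \partial^2 = \pi_*\iota^*\pi_*\iota^* = \rho_* \circ (m \times \mathrm{id}_X)^* \circ \iota^*, \]
where $\rho\colon S^3 \times S^3 \times X \to X$ is the projection. This identity is obtained from (i) base change, the naturality of fiber integration applied to the pullback of $\pi$ along $\iota$, whose total space is identified with $S^3 \times S^3 \times X$ and whose two legs realize the iterated action and the multiplication $m$; and (ii) Fubini, the functoriality of fiber integration for the composite bundle $S^3 \times S^3 \times X \to S^3 \times X \to X$. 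Once $\partial^2$ is in this form, the integrand is pulled back along $m \times \mathrm{id}_X$, and $m$ is a surjective submersion with $3$-dimensional fibers, so the projection formula gives, for any $\eta \in \Omega^\bullet(S^3 \times X)$,
\[ (m \times \mathrm{id}_X)_*(m \times \mathrm{id}_X)^*\eta = \eta \wedge (m \times \mathrm{id}_X)_*(1) = 0, \]
since fiber integration of the $0$-form $1$ over positive-dimensional fibers vanishes. Applying this with $\eta = \iota^*\alpha$ and then $\pi_*$ yields $\partial^2 = 0$; this vanishing is the cochain-level shadow of $s^2 = 0$ in $\Lambda(s)$.

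The hard part will be step (i): identifying the pullback space in the base-change square explicitly as $S^3 \times S^3 \times X$, verifying that its two projections really are the iterated action and multiplication, and checking that the induced fiber orientations match so that the signs in base change and Fubini are correct. Everything else is a formal consequence of the standard properties of $\iota^*$ and $\pi_*$. I would also note, to forestall confusion, that I do not claim $\partial$ to be a graded derivation of the wedge product at the level of forms; the mixed vertical-degree cross terms obstruct this, and the Leibniz rule is recovered only after passing to cohomology, where $s$ is primitive. This is consistent with the statement of the proposition, which asserts only that $\partial$ is a degree $-3$ differential reducing to the cohomology differential and anticommuting with $d$.
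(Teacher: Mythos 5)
Your construction is correct and, once unwound, produces the same operator as the paper's, but the verification is a genuinely different route. The paper never invokes fiber integration as such: it builds a quasi-isomorphism $\sigma\colon \Omega^\bullet(\HH^\times)\to\Lambda(s)$ (evaluation at $1$ in degree $0$, a normalized $\oint_{S^3}$ in degree $3$) and defines the coaction as the composite
\[\Omega^\bullet(X)\to\Omega^\bullet(\HH^\times\times X)\to\Omega^\bullet(\HH^\times\times X)\otimes_{\Omega^\bullet(\HH^\times)}\Lambda(s)\simeq\Lambda(s)\otimes\Omega^\bullet(X),\]
with $\partial$ the $s$-component of the resulting $\Lambda(s)$-comodule structure; there $\partial^2=0$ is coassociativity, driven by the almost-Hopf comultiplication $\mu^*$ on $\Omega^\bullet(\HH^\times)$ together with $\Delta(s)=s\otimes 1+1\otimes s$, and anticommutation with $d$ is the Koszul sign $d(s\otimes y)=-s\otimes dy$ --- the same fact you express as $d\pi_*=(-1)^{\dim S^3}\pi_*d$. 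Since extracting the $s$-component through $\sigma$ is precisely integration over the $S^3$ fiber, your $\partial=\pi_*\iota^*$ coincides with the paper's differential, and your restriction from $\HH^\times$ to $\mathrm{Sp}(1)$ is harmless because the inclusion is a group homomorphism and a homotopy equivalence. What your route buys is strictness: base change, Fubini, and the projection formula with $(m\times\mathrm{id}_X)_*(1)=0$ are honest identities of forms, whereas the paper's displayed identification $\Omega^\bullet(\HH^\times\times X)\otimes_{\Omega^\bullet(\HH^\times)}\Lambda(s)\simeq\Lambda(s)\otimes\Omega^\bullet(X)$ is only asserted (forms on a product are not a tensor product, and $\sigma$ as defined is not strictly multiplicative), so your proof of $\partial^2=0$ is arguably the more rigorous of the two; your flagged ``hard part'' also checks out, since the pullback of $\pi$ along $\iota$ is $(g,h,x)\mapsto((g,x),(h,gx))$ with $\iota\circ\tilde{f}(g,h,x)=(hg)x$ by associativity, exactly as you predicted. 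What the paper's comodule packaging buys is multiplicativity: a comodule-algebra structure over $\Lambda(s)$ makes $\partial$ a graded derivation, which is what the paper means by compatibility with the CDGA structure, and which the formality proposition later leans on when computing the degree $-3$ differential on products $\psi(e_{A_1})\cdots\psi(e_{A_r})$. You are right that $\pi_*\iota^*$ is not obviously a derivation at the level of forms and that the proposition as stated does not require it; but if your proof were substituted into the paper, the Leibniz-type behavior needed downstream would have to be supplied separately, e.g.\ verified directly on the specific forms $a^*(\omega)$, whose $\iota^*$-pullbacks have the decomposable shape required by the projection formula thanks to the $\HH$-linearity of $a$.
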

\begin{proof}
  The homology of $\HH^\times$ is the exterior algebra $\Lambda(s)$ with $s$ in degree $3$; there exists a quasi-isomorphism $\sigma: \Omega^\bullet(\HH^\times) \rightarrow \Lambda(s)$ defined in degree $0$ as $\sigma(f)=f(1)$ and in degree $3$ as
  \[\sigma(\omega)=C \oint_{S^3} \omega \]
  where $S^3$ represents the unit hypersphere in the quaternions and $C$ an arbitrary normalization factor.

  The map $\mu: S^3 \times S^3 \rightarrow S^3$ induces a map $\Omega^\bullet(\HH^\times) \rightarrow \Omega^\bullet(\HH^\times \times \HH^\times)$ which is almost but not quite the comultiplication of a (commutative differential graded) Hopf algebra; however, the quasi-isomorphism
  \[\Omega^\bullet(\HH^\times) \otimes \Omega^\bullet(\HH^\times) \rightarrow \Omega^\bullet(\HH^\times \times \HH^\times)\] 
  allows a map into
  \[\Omega^\bullet(\HH^\times \times \HH^\times) \otimes_{\Omega^\bullet(\HH^\times) \otimes \Omega^\bullet(\HH^\times)} (\Lambda(s) \otimes \Lambda(s)) \simeq \Lambda(s) \otimes \Lambda(s)\]
  which commutes with the composition of $\sigma$ and the coproduct on $\Lambda(s)$ where $\Delta(s)=s \otimes 1+1 \otimes s$.

  Given a manifold $X$ with a $\HH^\times$-action, the composition of the maps

  \[\Omega^\bullet(X) \rightarrow \Omega^\bullet(H^\times \times X) \rightarrow \Omega^\bullet(H^\times \times X) \otimes_{\Omega^\bullet(H^\times)} \Lambda(s) \simeq \Lambda(s) \otimes \Omega^\bullet(X) \]
  produces a candidate for a $\Lambda(s)$-comodule structure on $\Omega^\bullet(X)$, which turns out to in fact be a comodule structure compatible with the commutative differential graded algebra structure, which is equivalent to a differential of degree $-3$; compatibility with the corresponding maps in homology follows from the maps other than the action being quasi-isomorphisms.
\end{proof}
The de Rham complex has another important property: everything will lie in the range of gradings between $0$ and the dimension of the manifold.  In fact, given a subset $X$ of $\CC^n$, the fact that
\[\Omega^\bullet(X) \simeq \Lambda(dz_1,...,dz_n) \otimes \Lambda(\overline{dz_1},...,\overline{dz_n}) \otimes C^\infty(X) \]
as vector spaces over $\CC$ means that there is a bigrading such that $\Omega^{p,q}(X)=0$ if $p>n$ or $q>n$.

\begin{proposition}
  Given a hyperplane arrangement $\Aa$ with corresponding complement $X \subset V$, there exists an algebra homomorphism
  \[\psi: O_{\times 3}(\Aa) \rightarrow \Omega(X) \]
  such that, if $O_{\times 3}(\Aa)$ is given $0$ for its degree $1$ and its usual degree $-3$ differential, and $\Omega(X)$ given the differential from Proposition \ref{prop:s3d}, $\psi$ commutes with both differentials, and that taking the cohomology with respect to the degree $1$ differential, is a quasi-isomorphism.
\end{proposition}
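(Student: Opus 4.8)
The plan is to promote the isomorphism $\varphi$ of the previous proposition to the level of forms by choosing an explicit closed representative of each generator. Fix the rotation- and scale-invariant angular $3$-form $\eta$ generating $H^3(\HH^\times)$, normalized so that $\partial\eta=1$ in the sense of Proposition~\ref{prop:s3d}. For $A\in\Aa$ the map $a\colon V\to\HH$ restricts to a left-$\HH^\times$-equivariant map $a\colon X\to\HH^\times$, and I set $\omega_A=a^*\eta\in\Omega^3(X)$; this is well defined since $a$ is nonzero off $A$, and it is unchanged if $a$ is rescaled on the right, since $\eta$ is right-invariant. Extending multiplicatively gives an algebra map $\widetilde{\psi}\colon E_{\times3}(\Aa)\to\Omega(X)$. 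Two compatibilities are immediate on generators and propagate by the Leibniz rule: each $\omega_A$ is closed, so $\widetilde{\psi}$ lands in the $d$-closed subalgebra and commutes with the degree-$1$ differential (which vanishes on $O_{\times3}(\Aa)$); and equivariance of $a$ together with $\partial\eta=1$ makes $\partial\omega_A=1$, so $\widetilde{\psi}$ commutes with the degree-$(-3)$ differential of Proposition~\ref{prop:s3d}.

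The real content is to show $\widetilde{\psi}$ factors through $O_{\times3}(\Aa)=E_{\times3}(\Aa)/I$, i.e.\ that the $\omega_A$ satisfy the Orlik--Solomon relations exactly as forms, not merely in cohomology. Since $e_C=e_{A_0}\,\partial(e_C)$ for any circuit $C=\{A_0,\dots,A_p\}$, it suffices to treat the full products $\omega_{A_0}\wedge\cdots\wedge\omega_{A_p}$ over circuits; using right- and scale-invariance of $\eta$ one normalizes the dependency to $\sum_i a_i=0$ and reduces to the single universal circuit in $\HH^p$ given by the coordinate functionals and their negated sum. I expect this descent to be the main obstacle. In the complex case the analogous product vanishes for the crude logarithmic representative purely by Hodge type---$p+1$ forms of type $(1,0)$ on a space of complex dimension $p$ must wedge to zero---but no comparable collapse is available here: $\omega_{A_0}\wedge\cdots\wedge\omega_{A_p}$ has real degree $3(p+1)$, which exceeds the real dimension $4p$ only when $p\le 2$, and a direct evaluation shows that for the round $\eta$ and a circuit of size at least five this product is a nonzero form (cohomologically trivial, by the bound $H^i(X)=0$ for $i>3\dim V$ from Section~\ref{sec:ses}, but not zero). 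Thus the round representatives do not descend, and the heart of the argument is to produce ones that do.

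Rather than confront the large circuits directly, I would construct $\psi$ by induction on $|\Aa|$, lifting the combinatorial sequence $0\to O_{\times3}(\Aa')\to O_{\times3}(\Aa)\xrightarrow{\sigma}O_{\times3}(\Aa'')\to0$ to forms in parallel with Section~\ref{sec:ses}. Fixing $A$, the inclusion $X\hookrightarrow X'$ gives a restriction $\Omega(X')\to\Omega(X)$ through which $\psi'$ supplies the $O_{\times3}(\Aa')$-summand, while the tubular-neighborhood construction behind $H^\bullet(X',X)\cong H^{\bullet-4}(X'')$ produces, from $\psi''$ and $\omega_A$, forms realizing the $e_A\cdot O_{\times3}(\Aa'')$-summand. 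Defining $\psi$ by these two pieces, every element of the Orlik--Solomon ideal either already lives in $O_{\times3}(\Aa')$ or is detected by $\sigma$ in $O_{\times3}(\Aa'')$, so the inductive hypotheses that $\psi'$ and $\psi''$ descend force $\psi$ to descend, and the troublesome circuit relations never have to be verified by hand. Establishing strict multiplicativity of the glued map across the two summands---not merely multiplicativity up to homotopy---is the delicate step, and is precisely what turns the Feichtner--Yuzvinsky zig-zag of the footnote into a single direct quasi-isomorphism.

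Granting that $\psi\colon O_{\times3}(\Aa)\to\Omega(X)$ is a well-defined algebra map commuting with both differentials, passing to degree-$1$ cohomology yields an algebra map $O_{\times3}(\Aa)\to H^\bullet_{\mathrm{dR}}(X)=H^\bullet(X;\RR)$ sending $e_A$ to $[\omega_A]=a^*(s)=\varphi(e_A)$. It therefore agrees with the isomorphism $\varphi$ of the previous proposition after tensoring with $\RR$, hence is itself an isomorphism, so $\psi$ is the desired quasi-isomorphism. The normalization of $\eta$ and the signs in the degree-$(-3)$ Leibniz rule should be checked against Proposition~\ref{prop:s3d}, but these are routine.
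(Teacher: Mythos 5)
Your inductive construction stalls exactly at the step you yourself flag as delicate, and that step is the entire content of the proposition. The tubular-neighborhood isomorphism $H^\bullet(X',X)\simeq H^{\bullet-4}(X'')$ is realized at the level of forms by extension-by-cutoff maps that are neither multiplicative nor strictly compatible with $d$, so gluing $\psi'$ and $\psi''$ along the deletion--restriction sequence yields, as written, only a map that is multiplicative and differential-compatible up to homotopy --- which is precisely the zig-zag situation of \cite{feichtneryuzvinsky} that a direct quasi-isomorphism is meant to replace. Since you offer no mechanism to rigidify the gluing, the proposal is not a proof; it reduces the proposition to an unproved (and in fact harder-looking) statement.

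The underlying miss is your claim that ``no comparable collapse is available'' in the quaternionic case: the paper's proof consists exactly of restoring the Hodge-type collapse, and once it is restored your inductive detour is unnecessary. Fix a ring inclusion $\CC\rightarrow\HH$, so that $V\simeq\CC^{2n}$ and $\Omega^\bullet(X)$ acquires a bigrading with $\Omega^{p,q}(X)=0$ for $p>2n$. The generator of $H^3(\HH^\times)$ admits a representative of \emph{pure type} $(2,1)$, unlike your round $\eta$: identifying $\HH\simeq\CC^2$, take
\[\omega=\frac{dz\wedge dw\wedge(\overline{z}\,\overline{dw}-\overline{w}\,\overline{dz})}{(|z|^2+|w|^2)^2}, \]
normalized so that $\partial\omega=1$ (the denominator must be squared for closedness). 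Since each $a$ is in particular $\CC$-linear, $\psi(e_A)=a^*\omega\in\Omega^{2,1}(X)$, so an $r$-fold product lies in $\Omega^{2r,r}(X)$; you measured only the total real degree $3r$ against $4n$, but the operative bound is the holomorphic degree $2r$ against $\dim_\CC V=2n$, which forces vanishing as soon as $r>n$. If $A_1\cap\cdots\cap A_r$ has codimension $s<r$, the product $\psi(e_{A_1})\wedge\cdots\wedge\psi(e_{A_r})$ is pulled back from a hyperplane complement in a quotient of quaternionic dimension $s<r$, where it vanishes identically for this type reason; and since $\psi$ commutes with $\partial$, the generators $\partial(e_{A_1}\cdots e_{A_r})$ of the Orlik--Solomon ideal map to zero as honest forms, not merely in cohomology. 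Hence the directly and multiplicatively defined $\psi$ descends to $O_{\times 3}(\Aa)$ with no induction and no circuit-by-circuit verification, and your final paragraph (agreement with $\varphi$ on cohomology) then correctly finishes the quasi-isomorphism claim exactly as in the paper.
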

\begin{proof}
  First, fix an arbitrary ring inclusion $\CC \rightarrow \HH$; this will make all $\HH$-modules into $\CC$-vector spaces, and allow the use of the double grading on $\Omega^\bullet(X)$.  Next, define the $3$ form
  \[\omega(z,w)=\frac{dz \wedge dw \wedge (\overline{z} \overline{dw}-\overline{w}\overline{dz})}{|z|^2+|w|^2} \in \Omega^{2,1}(\CC^2 \setminus (0,0)), \]
  which is closed but not exact, and after identifying $\HH$ with $\CC^2$, can be chosen as an element of $\Omega^{2,1}(\HH^\times)$ such that $\partial \omega = 1$.  

  Next, again note that each hyperplane $A$ in an arrangement can be represented by a map $a: V \rightarrow \HH$ which restricts to $a: X \rightarrow \HH^\times$; much like before, let $\psi(e_A)=a^*(\omega)$.  Since $a$ is an $\HH$-linear map, it will preserve the degree $-3$ differential, and since $\HH$-linear maps are also $\CC$-linear, it will preserve the bigrading as well, so that $\partial(\psi(e_A))=1$, $\psi(e_A) \in \Omega^{2,1}(X)$, and that $\psi(e_A)$ reduces to an appropriate value in cohomology.

  Now, all that is left is to show that $\psi$ is in fact well-defined; that is, that the value will be zero at the right places.  First, given $\{A_1,...,A_r\} \subset \Aa$ such that $r > \dim V$, we have 
  \[\psi(e_{A_1}...e_{A_r}) \in \Omega^{2r,r}(X)=0. \]
  Next, since any such product with $A_1 \cap ... \cap A_r$ of codimension $<r$ factors through a complement of dimension $<r$, the product will also be zero, and since $\psi$ preserves both differentials, the differential will also be zero, thus implying that the algebra homomorphism does in fact exist.
  By the fact that $\psi(e_A)$ corresponds to a closed form and its image in the de Rham cohomology will correspond to the one in section \ref{sec:osa}, the proof of isomorphism there will imply quasi-isomorphism here.
\end{proof}
Given 5.4 in \cite{neisendorfermiller} and the quasi-isomorphism between $A_{PL}$ and the de Rham complex, we therefore have that the rational homotopy theory as well as the real homotopy theory of any quaternionic hyperplane complement is a formal consequence of its cohomology.
\bibliography{was}{}
\bibliographystyle{alpha}
\end{document}